\theoremstyle{plain}\newtheorem{Theorem}{Theorem}[section]
\theoremstyle{plain}
\theoremstyle{plain}
\theoremstyle{plain}\newtheorem{Lemma}[Theorem]{Lemma}
\theoremstyle{plain}\newtheorem{Proposition}[Theorem]{Proposition}
\theoremstyle{definition}
\theoremstyle{definition}
\theoremstyle{definition}
\theoremstyle{definition}\newtheorem{Remark}[Theorem]{Remark}
\theoremstyle{definition}
\theoremstyle{plain}
    \def\OG{{\mathcal{O}G}}  \def\OGb{{\mathcal{O}Gb}}
    \def\OH{{\mathcal{O}H}}  
    \def\OP{{\mathcal{O}P}}
\def\CO{{\mathcal{O}}}
\def\C{{\mathbb C}}
\def\Q{{\mathbb Q}}
\def\Z{{\mathbb Z}}
\def\Aut{\mathrm{Aut}}                
\def\Br{\mathrm{Br}}              
\def\dim{\mathrm{dim}}
\def\End{\mathrm{End}}
\def\Ind{\mathrm{Ind}}
\def\Irr{\mathrm{Irr}}           
           \def\tenO{\otimes_{\mathcal{O}}}
\def\SL{\mathrm{SL}}
\def\Tr{\mathrm{Tr}}
\title{On character degrees of cyclic and Klein four defect blocks} 
\author{Markus Linckelmann} 
\date{\today}
\address{Markus Linckelmann \\
School of Science \& Technology \\
Department of Mathematics \\
City St George's, University of London \\
Northampton Square \\
London EC1V 0HB \\
United Kingdom}
\email{markus.linckelmann.1@city.ac.uk}
\subjclass[2010]{20C20}
\keywords{Finite group, McKay conjecture, character degrees}
\begin{document}

\begin{abstract}
We show that a conjecture of Giannelli on character degrees of height zero
characters holds for blocks with a cyclic or Klein four defect group.
\end{abstract}

\maketitle

%%%%%%%%%%%%%%%%%%%%%%%%%%%%%%%%%%%%%%%%%
\section{Introduction} %%%%%%%%%%%%%%%%%%%%%%%%%%%%%

Let $p$ be a prime and $\CO$ a complete discrete valuation ring with 
an algebraically closed residue field $k$ of prime characteristic $p$ and
a field of fractions $K$ of characteristic $0$.  

\medskip
Throughout the paper, $G$ is a finite group, $B$ a block of $\OG$ with
a  defect group $P$, and $C$ is the block of $\CO N_G(P)$ which is the
Brauer correspondent of $B$. We set $b=1_B$ and $c=1_C$; that is, $b$, $c$
are the primitive idempotents in $Z(\OG)$, $Z(\CO N_G(P))$ satisfying
$B=\OGb$ and $C=\CO N_G(P)c$. We assume that $K$ is a splitting field
for $K\tenO B$ and $K\tenO C$ and related blocks. 
We denote by $\Irr(G)$ the set of characters of the simple $KG$-modules
and by $\Irr(B)$ the set of characters of the simple $K\tenO B$-modules.
We denote by $\Irr_0(B)$ the subset of height zero characters in $\Irr(B)$.

\medskip
The McKay conjecture for blocks predicts that there is a bijection
$\Irr_0(B)\cong$ $\Irr_0(C)$. Much effort has been put into the question
what additional properties such a bijection should have. 
Giannelli conjectured in \cite[Conjecture B]{Gia25} that  there is
a bijection $f :\Irr_0(B)\cong$ $\Irr_0(C)$ satisfying $f(\chi)(1)\leq \chi(1)$
for all $\chi\in$ $\Irr_0(B)$.
We show that this conjecture holds for $P$ cyclic and
for $p=2$ and $P$ a Klein four defect group (in both cases we have $\Irr_0(B)=$
$\Irr(B)$ and the McKay conjecture, which in these cases is equivalent
to Alperin's weight conjecture, is known to hold; see \cite{Da66}, \cite{Bra71}). 
We show slightly more
precisely, that the bijection $f$ can be chosen to be the character bijection
induced by a perfect isometry. We refer to \cite[Sections 9.2, 9.3]{LiBookII}
for background material on perfect isometries. 

\begin{Theorem} \label{thm1}
Suppose that the defect group $P$ of the block $B$ is cyclic, or that $p=2$ 
and $P$ is a Klein four group.  Then there is a perfect isometry
$\Phi : \Z\Irr(B)\cong$ $\Z\Irr(C)$ with the property that
$f(\chi)(1)\leq \chi(1)$ for any $\chi\in$ $\Irr(B)$,  where $f : \Irr(B)\cong$ $\Irr(C)$ 
is the bijection such that $\Phi(\chi) =$ $\delta(\chi)f(\chi)$ for some signs
$\delta(\chi)\in\{1,-1\}$, for all $\chi\in \Irr(B)$. In particular,
 \cite[Conjecture B]{Gia25} holds for blocks with a cyclic or Klein four defect group. 
\end{Theorem}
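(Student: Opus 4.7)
The plan is to treat the cyclic defect case and the Klein four defect case separately, in each case exploiting a known Rickard-type derived equivalence between $B$ and $C$ to produce a perfect isometry $\Phi$. The nontrivial task is then to check that the character bijection $f$ induced by $\Phi$ (or by some modification of $\Phi$ that remains a perfect isometry) satisfies the degree inequality $f(\chi)(1)\leq\chi(1)$.

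\textbf{Cyclic defect.} I would use the Brauer tree description. The block $B$ has Brauer tree $T_B$ with $e$ edges and a distinguished exceptional vertex of multiplicity $m=(|P|-1)/e$, while $C$ has Brauer tree $T_C$ the star with $e$ edges and the exceptional vertex at the centre. The degree of the ordinary character at a vertex $v$ of either tree is $\sum_{S\ni v}\dim(S)$, summed over the edges $S$ incident to $v$, with the exceptional vertex contributing $m$ characters of this common degree. Rickard's derived equivalence between $B$ and $C$ induces a perfect isometry whose character bijection identifies the two sets of $m$ exceptional characters and pairs each non-exceptional vertex of $T_B$ with a leaf of $T_C$ via the tree walk underlying the tilting complex. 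To verify the inequality I would then check two points: first, that the sum of simple-module dimensions on the $B$ side over all $e$ edges dominates the analogous sum on the $C$ side (yielding $\chi^C_{\mathrm{exc}}(1)\leq\chi^B_{\mathrm{exc}}(1)$ for the paired exceptional characters), and second, that each non-exceptional character in $C$, whose degree is a single edge-dimension, is paired with a non-exceptional character in $B$ whose vertex is incident to that edge (or a larger one), so that its degree also bounds the $C$-side value. Both points ultimately follow from the Green correspondence between $B$- and $C$-modules in the cyclic case.

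\textbf{Klein four defect ($p=2$).} Up to Morita (in fact source-algebra) equivalence there are exactly three types of $2$-blocks with Klein four defect group: the principal blocks of $kV_4$, $kA_4$, or $kA_5$, by Erdmann's classification. In the first two types $P$ is normal in the Morita model, so $C=B$ and the identity is a perfect isometry with $f(\chi)(1)=\chi(1)$. In the remaining $A_5$ case, $B$ has character degrees $1,3,3,5$ while its Brauer correspondent, the principal block of $A_4$, has character degrees $1,1,1,3$; Rickard's derived equivalence between these two principal blocks induces a perfect isometry, and a direct computation on the small character tables yields a bijection (up to sign) such as $1\mapsto 1$, $5\mapsto 3$, and the two characters of degree $3$ mapping to two linear characters, so that all four inequalities are satisfied.

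\textbf{Main obstacle.} The delicate step is the cyclic case. The existence of a perfect isometry is classical, but the specific bijection induced by Rickard's derived equivalence is not a priori compatible with the degree ordering, since $T_B$ may be arbitrarily shaped while $T_C$ is always a star. The combinatorial pairing of non-exceptional vertices across the Brauer trees, together with the comparison of simple-module dimensions between $B$ and $C$ under the Green correspondence, has to be verified carefully, and may require a subtle choice among the perfect isometries available, or an argument that the natural one always does the job.
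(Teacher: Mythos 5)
Your proposal has two genuine problems, one in each case, and it takes a fundamentally different route from the paper.

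\textbf{Klein four case.} The claim that for blocks Morita equivalent to $kV_4$ or $kA_4$ one has ``$C=B$'' is incorrect. The Brauer correspondent $C$ is a block of $\CO N_G(P)$, not of $\OG$, and the fact that the Morita \emph{model} of $B$ has a normal defect group says nothing about whether $P\trianglelefteq G$. More importantly, you are comparing the character degrees of the Morita models ($1,3,3,5$ for the principal block of $A_5$, etc.). Morita equivalences do not preserve character degrees (e.g.\ $k$ and $M_2(k)$ are Morita equivalent), so the degrees of the four height-zero characters of an arbitrary block with Klein four defect and inertial quotient of order $3$ are not literally $1,3,3,5$, and neither are those of $C$ literally $1,1,1,3$. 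A comparison of actual degrees between $B$ and $C$ cannot be read off the Morita models directly.

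\textbf{Cyclic case.} You flag yourself that the degree comparison across the Brauer trees is the delicate step and do not close it. That gap is real. Writing a non-exceptional $B$-character degree as a sum of $B$-side simple module $k$-dimensions and comparing it with the corresponding leaf on the star tree of $C$ requires relating $k$-dimensions of simple $B$-modules and simple $C$-modules, and these again carry different block-dependent multiplicative factors (e.g.\ coming from $\dim_k$ of the unique simple $kC_G(P)e$-module). No argument is given that the tree-walking bijection (or any particular perfect isometry) respects these.

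\textbf{What the paper does instead.} The paper first proves Theorem~\ref{thm2}/Lemma~\ref{thm2-Lemma}, a reduction which converts the block-level inequality $f(\chi)(1)\leq\chi(1)$ into the source-algebra inequality $\dim_K(jY)\leq\dim_K(iX)$; this is precisely the device that eliminates the spurious multiplicative factors that your block-level comparison does not control. On the source-algebra side, $jCj\cong\CO(P\rtimes E)$ and $\CO(P\rtimes E)$ embeds unitally into $iBi$ by Proposition~\ref{jfi-Prop}. For cyclic $P$ with $p$ odd (and $|\Lambda|\geq 2$) the exceptional characters of $P\rtimes E$ and of $B$ are exactly the non-$p$-rational ones, and Lemma~\ref{nilp-dec-Lemma} shows that restricting $iX$ (for $\chi_\lambda$ exceptional) to $K(P\rtimes E)$ must contain some $\lambda'\in\Lambda$, giving $\lambda'(1)\leq\chi_\lambda(i)$; since all $\lambda$ have the same degree this does the job, and non-exceptional characters of $\CO(P\rtimes E)$ have degree $1$, so there is nothing to prove for them. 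For $|\Lambda|=1$ (which covers $p=2$ with $|E|=|P|-1$, hence the non-nilpotent Klein four case) one simply uses Lemma~\ref{dim-Lemma} to choose $\chi_\lambda$ appropriately. Your approach and the paper's are genuinely different, but without a lemma playing the role of Lemma~\ref{thm2-Lemma} the Morita/Brauer-tree classification approach cannot reach the actual degree inequality.
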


The basic strategy for proving Theorem \ref{thm1} is as follows: after
reviewing background material on source algebras and decomposition matrices
in Section \ref{background-Section}, we show in Section
\ref{source-Section}  that a proof of \cite[Conjecture B]{Gia25} follows 
from the analogous  inequalities between the dimensions of  simple modules, 
over $K$,   of the source algebras of $B$ and $C$. We  then 
show in Section \ref{frob-Section} that the  source algebra version of this 
conjecture holds  for all blocks satisfying Alperin's weight conjecture 
with an abelian defect group and a cyclic inertial
quotient $E$ which acts freely on $P\setminus \{1\}$, with either $p$ odd
or $|E|=$ $|P|-1$. We show more precisely that the map  $f$ can be chosen as
being induced by a perfect isometry.

 %%%%%%%%%%%%%%%%%%%%%%%%%%%%%%%%%%%%%%%%%%%
 \section{Block theory background} %%%%%%%%%%%%%%%%%%%%%%%
 \label{background-Section}
 
We review the  standard facts which relate source algebras of
blocks and their Brauer correspondents.  We keep the notation and
hypotheses introduced at the beginning of the paper. We denote by $\Br_P$ the
Brauer homomorphism (see e.g.  \cite[Theorem 5.4.1]{LiBookI}). Given an
idempotent $i$ in the $P$-fixed point algebra $(\OG)^P$ with respect
to the conjugation action of $P$ on $\OG$, the condition $\Br_P(i)\neq 0$
is equivalent to requiring that $i\OG i$ has a direct summand isomorphic
to $\OP$ as an $\OP$-$\OP$-bimodule (cf. \cite[Lemma 5.8.8]{LiBookI}).

\begin{Proposition}[{\cite[3.5]{Puigpoint}, cf. \cite[Theorem 6.4.6]{LiBookII}}]
\label{iBi-Morita-Prop}
Let $i$ be an idempotent in $B^P$ such that $\Br_P(i)\neq 0$.
Then the $B$-$\OP$-bimodule $Bi$ and the $\OP$-$B$-bimodule $iB$
induce a Morita equivalence between $B$ and $iBi$.
\end{Proposition}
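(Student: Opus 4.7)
My plan is to reduce the Morita equivalence assertion to the single condition that $i$ is a \emph{full} idempotent of $B$, meaning $BiB = B$. Granted that equality, standard progenerator theory yields the $B$-bimodule isomorphism $Bi \otimes_{iBi} iB \cong B$, identifies $\End_B(Bi)^{\op}$ with $iBi$ via right multiplication, and delivers the Morita equivalence between $B$ and $iBi$ realised by the inverse bimodules $(Bi, iB)$. The companion multiplication map $iB \otimes_B Bi \to iBi$ is automatically an isomorphism for any idempotent in any unital algebra, with inverse $ixi \mapsto ix \otimes i$, so it contributes no new input.

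The substantive step is therefore the equality $BiB = B$. For this I would combine two standard ingredients. First, the reformulation of $\Br_P(i) \neq 0$ recalled immediately before the proposition supplies $\OP$-$\OP$-bimodule homomorphisms $\alpha : \OP \to iBi$ and $\beta : iBi \to \OP$ satisfying $\beta\alpha = \Id_{\OP}$. Second, the hypothesis that $P$ is the defect group of $B$ is equivalent, via Higman's criterion applied to the $B$-$B$-bimodule $B$, to $b$ being expressible as a relative $\Delta P$-trace
\[
b \;=\; \sum_{(g,h)\in (G\times G)/\Delta P} g a h^{-1}
\]
for some $a \in B^P$.

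The crucial step is then to arrange that this witness $a$ can be chosen inside $iBi$: each summand $g a h^{-1} = (gi)(ai)(h^{-1})$ would then lie in $Bi \cdot iB \subseteq BiB$, forcing $b \in BiB$. Producing such an $iBi$-witness uses the splitting $\alpha,\beta$: intuitively, $\beta\alpha = \Id_{\OP}$ lets one ``correct'' a general $B^P$-witness into an $iBi$-witness without altering its relative trace. This correction is the main technical obstacle; conceptually it is the standard mechanism by which defect data translate into structural decompositions, and once the corrected witness is in hand the remainder of the argument is formal bookkeeping.
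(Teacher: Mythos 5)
Your reduction to the fullness statement $BiB = B$ is the right first move, and the progenerator bookkeeping you describe once that equality is in hand is standard and correct. The paper itself gives no proof of this proposition -- it is cited from Puig and from \cite[Theorem 6.4.6]{LiBookII} -- so the comparison is with the known argument.

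There are, however, two problems with the way you propose to establish $BiB = B$. The first is a formulation error: the defect group condition via Higman's criterion for $B$ as a $\CO(G\times G)$-module says that $\Id_B = \Tr^{G\times G}_{\Delta P}(\varphi)$ for some $\varphi \in \End_{\CO\Delta P}(B)$, or, in the internal form most useful here, that $b = \Tr^G_P(a)$ for some $a \in B^P$. The expression you wrote, $b = \sum_{(g,h)\in (G\times G)/\Delta P} g a h^{-1}$ with $a\in B$, is the $G\times G$-relative trace of an \emph{element} of $B$ rather than of an endomorphism; this picks up an unwanted factor of the group-algebra norm element $\sum_{g\in G} g$ and does not characterise defect groups. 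The second, and genuine, gap is the ``crucial step'' itself: you assert that the witness $a$ can be corrected to lie in $iBi$ without changing its relative trace, invoking the bimodule splitting $\OP \hookrightarrow iBi$, but you give no mechanism. There is no obvious one: replacing $a$ by $iai$, or by $\alpha(\beta(iai))$, changes the value of $\Tr^G_P$ completely, and the split injection $\alpha$ and retraction $\beta$ are $\OP$-$\OP$-bimodule maps, not $G$-equivariant in any sense that would commute with $\Tr^G_P$. In fact the assertion ``the trace witness can be chosen in $(iBi)^P$'' is essentially equivalent to the conclusion $BiB=B$, so nothing has been gained.

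The argument actually used (in \cite[Theorem 6.4.6]{LiBookII}) takes a different route. One observes that $BiB$ is a $G$-conjugation-stable two-sided ideal of $B$ containing $\Tr^G_P\bigl((iBi)^P\bigr)$, and that $\Tr^G_P : B^P \to Z(B)=B^G$ is surjective because $P$ is a defect group (so $B^G_P = Z(B)$). Since $Z(B)$ is local, it then suffices to show that the ideal $\Tr^G_P\bigl((iBi)^P\bigr)$ of $Z(B)$ is not contained in $J(Z(B))$; this is where $\Br_P(i)\neq 0$ enters, via the fact that $\Tr^G_P$ kills $\ker\Br_P$ modulo $J(Z(B))$ and a Rosenberg-type argument for the primitive idempotent $b\in Z(\OG)$. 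That invertibility argument -- rather than a hypothetical correction of the Higman witness -- is the substantive content of the proposition, and it is precisely what your sketch leaves out.
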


By a result of Picaronny and Puig in \cite{PiPu} 
(see e.g. \cite[Proposition 6.11.11]{LiBookII} for an expository account and 
further references),  the height of a character in a block can be read off the
source algebras of the block. For characters of height zero this remains true for 
source idempotents replaced by a slightly more general class of idempotents.

\begin{Proposition} \label{heightzero-Prop}
Let $i$ be an idempotent in $B^P$ such that $\Br_P(i)$ is a 
primitive idempotent in $kC_G(P)$. Let $X$ be a simple $K\tenO B$-module.
The character of $X$ has height zero if and only if $\dim_K(iX)$ is prime to $p$.
\end{Proposition}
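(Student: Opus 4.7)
The plan is to reduce to the case where $i$ is a source idempotent of $B$, which is exactly the Picaronny--Puig statement recalled just before the proposition. The reduction is carried out by decomposing $i$ in $B^P$ and showing that the non-source components contribute to $\dim_K(iX)$ only modulo $p$.

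First I would write $i = j + i_2 + \cdots + i_n$ as a sum of pairwise orthogonal primitive idempotents in $B^P$. Applying the ring homomorphism $\Br_P$ yields a decomposition $\Br_P(i) = \Br_P(j) + \sum_{\ell \geq 2}\Br_P(i_\ell)$ into pairwise orthogonal idempotents of $kC_G(P)$. Since $\Br_P(i)$ is primitive in $kC_G(P)$, at most one summand on the right can be nonzero; after relabelling I may assume $\Br_P(j) = \Br_P(i)$ and $\Br_P(i_\ell) = 0$ for all $\ell \geq 2$. Then $j$ is a source idempotent of $B$, and \cite[Proposition 6.11.11]{LiBookII} gives that $\dim_K(jX) = \chi(j)$ is prime to $p$ if and only if $h(\chi) = 0$.

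Next, for each $\ell \geq 2$ the primitive idempotent $i_\ell$ of $B^P$ with $\Br_P(i_\ell) = 0$ has a defect group $R_\ell$ strictly contained in $P$, so $i_\ell \in \Tr^P_{R_\ell}(B^{R_\ell})$ and I may write $i_\ell = \sum_{x \in [P/R_\ell]} x e_\ell x^{-1}$ for some $e_\ell \in B^{R_\ell}$. Since $\chi$ is a trace on $KG$, conjugation by $x \in P$ leaves $\chi$ invariant, so $\chi(i_\ell) = [P:R_\ell]\chi(e_\ell) \in p\CO$, because $R_\ell \lneq P$. Summing,
\[
\dim_K(iX) \;=\; \chi(i) \;=\; \chi(j) + \sum_{\ell \geq 2} \chi(i_\ell) \;\equiv\; \chi(j) \pmod{p\CO}.
\]
Since $\dim_K(iX)$ is a nonnegative rational integer, it is prime to $p$ in $\Z$ if and only if $\chi(j)$ is a unit in $\CO$, which by the previous step is equivalent to $h(\chi) = 0$.

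The only non-bookkeeping input is the identification of the primitive idempotents with trivial Brauer image as relative traces from proper subgroups of $P$; this is a standard consequence of the theory of defect groups of points on $G$-algebras, and together with the Picaronny--Puig identity for source idempotents it is the essence of the argument. I do not expect any real obstacle beyond being careful that the summands $\chi(i_\ell)$ are genuinely in $p\CO$ and that the final congruence in $\CO$ transfers to a divisibility statement in $\Z$.
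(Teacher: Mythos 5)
Your proof is correct, and it follows the same overall skeleton as the paper's: split $i$ into a source idempotent $j$ (with $\Br_P(j)=\Br_P(i)$) plus a complementary idempotent killed by $\Br_P$, invoke the Picaronny--Puig criterion for $j$, and show the complementary part contributes $0$ modulo $p$. The one place where you genuinely diverge is in the last step. The paper works at the level of the $\CO$-lattice $U$ affording $\chi$: it observes that every indecomposable direct summand of $i_1U$ (where $i_1=i-j$) has vertex strictly contained in $P$, hence $\CO$-rank divisible by $p$, citing a consequence of Green's Indecomposability Theorem. You instead stay at the level of character values, writing each remaining primitive summand $i_\ell$ as a relative trace $\Tr^P_{R_\ell}(e_\ell)$ from a proper subgroup (the defect group of the corresponding non-local point of $P$ on $B$, via Rosenberg's lemma), and using the class-function identity $\chi(\Tr^P_{R_\ell}(e_\ell))=[P:R_\ell]\,\chi(e_\ell)\in p\CO$. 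This sidesteps Green's theorem entirely and is arguably slightly more elementary, at the modest cost of having to say a word about why $\chi(e_\ell)\in\CO$ (it is the trace of an $\CO$-linear endomorphism of the lattice $U$) and why the congruence in $\CO$ descends to one in $\Z$ (since $p\CO\cap\Z=p\Z$), both of which you handle correctly. Either route is fine; the paper's module-theoretic phrasing dovetails with how $\Br_P(i_1)=0$ is used elsewhere in the text, while yours is a clean direct computation.
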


\begin{proof}
By the assumptions on  $i$ and standard lifting theorems on idempotents,
we may write $i=i_0 + i_1$ with orthogonal idempotents
$i_0$, $i_1$ in $B^P$ such that $i_0$ is a source idempotent (satisfying
$\Br_P(i_0)=$ $\Br_P(i)$) and such that $\Br_P(i_1)=0$. 
Let $U$ be an $\CO$-free $B$-module such that $K\tenO U\cong$
$X$; that is, $U$ affords the character of $X$. 
Write $iU=$ $i_0U\oplus i_1U$; this is a direct sum of $\OP$-modules.
By \cite{PiPu} (or \cite[Proposition 6.11.11]{LiBookII}), 
the $K\tenO B$-module $X$ has a character of height zero if and only if $i_0U$ has
$\CO$-rank prime to $p$. Since $\Br_P(i_1)=0$, every indecomposable direct
summand of $i_1U$ has a vertex strictly contained in  $P$, hence an $\CO$-rank
divisible by $p$ by \cite[Theorem 5.12.13]{LiBookI} (which is a consequence of
Green's Indecomposability Theorem). 
Thus the $\CO$-rank of $i_0U$ is prime to $p$ if and only
if the $\CO$-rank of $iU$ is prime to $p$, hence if and only if $\dim_K(iX)$ is
prime to $p$. 
\end{proof}

Set $N=N_G(P)$.  Then $c$ is contained in $(\CO C_G(P))^N$.
Choose a block idempotent $e$ of $\CO C_G(P)$ such that $ec=e$, and
setting $H=$ $N_G(P,e)$, we have $c=\Tr^N_H(e)$  
(cf. \cite[Theorems 6.2.6, 6.7.6]{LiBookII}). Note that $e$ remains a block idempotent
of $\OH$ with $P$ as a normal defect group. Moreover, any primitive 
idempotent $j\in$ $\CO C_G(P)e$ is in fact a source
idempotent $j\in $ $(\OH e)^P$ as well as in $(\CO Nc)^P$, and we
have $j\OH j=$ $j\CO N j$ (cf. \cite[Theorem 6.8.3]{LiBookII}). 
Source algebras of blocks with a normal
defect group are fully understood:

\begin{Proposition}[{\cite[Theorem A]{Kue85}; cf. \cite[Theorem 6.14.1]{LiBookII}}] 
\label{normal-Prop}
 With the notation above,  we have an isomorphism of interior $P$-algebras
$$j\OH j \cong \CO_\alpha(P\rtimes E),$$
where $E\cong$ $N_G(P,e)/P\C_G(P)$ is an inertial quotient of $B$ lifted to
a subgroup of $\Aut(P)$ 
and where  $\alpha\in$ $H^2(E; k^\times)$, inflated to $P\rtimes E$ and
regarded as an element of $H^2(P\rtimes E; \CO^\times)$ via the canonical
group isomorphism $\CO^\times \cong$ $k^\times \times (1+J(\CO))$.
\end{Proposition}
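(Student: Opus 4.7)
The plan is to follow Küllshammer's classical argument: decompose the source algebra $j \CO H j$ using the filtration $Z(P) \leq P C_G(P) \leq H$ and the inertial quotient $E = H/P C_G(P)$. Set $L = P C_G(P)$, a normal subgroup of $H$ with $H/L \cong E$. Because $H = N_G(P,e)$, the block idempotent $e$ is $H$-stable, so $\CO H e$ is $E$-graded by conjugation by a transversal of $L$ in $H$.

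First I would show $j \CO L j \cong \CO P$ as an interior $P$-algebra. Since $j \in \CO C_G(P)$, the elements of $P$ commute with $j$, so $j \CO L j = \CO P \cdot (j \CO C_G(P) j)$; hence it suffices to establish $j \CO C_G(P) j \cong \CO Z(P)$. Now $e$ is a block of $\CO C_G(P)$ with defect group $Z(P) = P \cap C_G(P)$ by Brauer's first main theorem applied inside $C_G(P)$; since $Z(P)$ is central in $C_G(P)$, this is a block with central defect group, and is therefore nilpotent with trivial inertial quotient. By the structure theorem for nilpotent blocks (Broué--Puig) its source algebra is isomorphic to the group algebra of the defect group, so $j \CO C_G(P) j \cong \CO Z(P)$. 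Combined with the observation above, this yields $j \CO L j \cong \CO P$.

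Next, $E = H/L$ acts on $j \CO L j \cong \CO P$ by conjugation with lifts from $H$. Modulo inner automorphisms this gives a homomorphism $E \to \Aut(\CO P)/\Inn(\CO P) \cong \Aut(P)$, which is the embedding of the inertial quotient into $\Aut(P)$ specified in the statement. I would then choose a transversal $\{h_\sigma\}_{\sigma \in E}$ of $L$ in $H$ so that conjugation by $j h_\sigma j$ implements the prescribed automorphism of $P$ on $j \CO L j \cong \CO P$, adjusting each $h_\sigma$ by a unit in $j \CO L j$ if necessary. The products $(j h_\sigma j)(j h_\tau j)$ then differ from $j h_{\sigma\tau} j$ by scalars $\alpha(\sigma,\tau) \in \CO^\times$, and associativity forces $\alpha$ to be a $2$-cocycle on $E$ with class in $H^2(E; k^\times)$ after reduction modulo $J(\CO)$. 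The assignment $u\sigma \mapsto (ju)(j h_\sigma j)$ for $u \in P$, $\sigma \in E$, then defines an algebra homomorphism $\CO_\alpha(P \rtimes E) \to j \CO H j$ compatible with the interior $P$-structure; it is bijective by a dimension count, both sides having $\CO$-rank $|P| \cdot |E|$.

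The main obstacle is the identification $j \CO C_G(P) j \cong \CO Z(P)$, which rests on the nontrivial structure theorem for blocks with central defect group. Everything else amounts to careful book-keeping with the $E$-graded crossed product structure and extraction of the cohomological obstruction; both cited references \cite{Kue85} and \cite[Theorem 6.14.1]{LiBookII} carry out these details in full.
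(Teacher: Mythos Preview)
The paper does not give its own proof of this proposition; it is quoted as a known result with attribution to K\"ulshammer \cite{Kue85} and the expository account \cite[Theorem~6.14.1]{LiBookII}, and is used throughout as a black box. There is therefore no ``paper's proof'' to compare against.

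Your sketch is a correct outline of the classical argument in those references: identify the degree-one component $j\CO L j \cong \CO P$ via the structure of the block $e$ of $\CO C_G(P)$ (which has central defect group $Z(P)$), then exhibit $j\CO H j$ as an $E$-graded crossed product over $\CO P$ and read off the $2$-cocycle $\alpha$. Two small remarks. First, the isomorphism $j\CO C_G(P)j \cong \CO Z(P)$ for a block with \emph{central} defect group is more elementary than the full Brou\'e--Puig nilpotent-block machinery you invoke: it follows directly from the fact that $\CO C_G(P)e$ is a matrix algebra over $\CO Z(P)$ (lift the block of defect zero of $k[C_G(P)/Z(P)]$). Second, the step ``adjusting each $h_\sigma$ by a unit in $j\CO L j$'' glosses over why $j h_\sigma j$ is a unit in $j\CO H j$ at all; the clean way is to use that ${}^{h_\sigma}j$ and $j$ are conjugate primitive idempotents in $\CO L e$, pick $a_\sigma \in (\CO L e)^\times$ with ${}^{a_\sigma h_\sigma}j = j$, and set $s_\sigma = j a_\sigma h_\sigma \in (j\CO H j)^\times$. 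These are exactly the ``careful book-keeping'' details you defer to the references, so your proposal is sound as a sketch.
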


The statement of  Proposition \ref{normal-Prop} uses  the fact 
that $E$ has order prime to $p$ (thanks to the assumption that $k$ is
algebraically closed), so the group $N_G(P,e)/PC_G(P)$, which a priori
is a subgroup of the outer automorphism group of $P$, lifts to an actual
automorphism group of $P$, uniquely up to conjugation in $\Aut(P)$. 
The canonical isomorphism $\CO^\times\cong$ $k^\times \times (1+J(\CO))$
exists again since $k$ is assumed algebraically closed, hence perfect. 
By a result of Fan and Puig in \cite{FanPu99}  there is a primitive idempotent
$f\in$ $B^H$ such that $\Br_P(fe)\neq 0$, and then $i=jf$ is a source
idempotent in $B^P$. Note that Proposition \ref{iBi-Morita-Prop}
applies in particular to the idempotents $bc$, $e$, and $f$  as defined above. 
As an immediate consequence, we have the following. 

\begin{Proposition}[{\cite[4.10]{FanPu99}, cf. \cite[Proposition 6.7.4]{LiBookII}}]
\label{jfi-Prop}

With the notation above, 
multiplication by $f$ induces a unital injective algebra homomorphism
$$\CO_\alpha(P\rtimes E) \cong jCj \to iBi$$
which is split as a homomorphism of $jCj$-$jCj$-bimodules. 
\end{Proposition}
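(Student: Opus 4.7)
The plan is to describe the map as \textquoteleft multiplication by $f$\textquoteright\ (both sides), verify it is a unital algebra homomorphism into $iBi$, and finally construct the $jCj$-bimodule retraction.

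The key preliminary observation is that $f$ commutes elementwise with $\OH$. Indeed, $f \in B^H$ means $hfh^{-1} = f$ for every $h \in H$, i.e., $hf = fh$; extending $\CO$-linearly, $f$ commutes with every element of $\OH$. Since $j \in (\CO C_G(P)e)^P \subseteq \OH$, and since $jCj = j\OH j$ as noted immediately before the proposition, both $j$ and every element of $jCj$ commute with $f$. In particular, $i = jf = fj$ is an idempotent, and for $x \in jCj$ one computes
\[
ixi \;=\; (jf)\,x\,(jf) \;=\; (fj)\,x\,(jf) \;=\; f(jxj)f \;=\; fxf \;=\; xf,
\]
using $jxj = x$ and $xf = fx$. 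So the map is $\phi(x) = fxf = xf$, and automatically $\phi(x) = ixi \in iBi$. Unitality ($\phi(j) = jf = i$) and multiplicativity ($\phi(x)\phi(y) = xf\cdot yf = x(fy)f = xyf^2 = xyf = \phi(xy)$) are then immediate. The isomorphism $\CO_\alpha(P\rtimes E) \cong jCj$ is exactly Proposition~\ref{normal-Prop} applied to the block $\OH e$ with normal defect group $P$ and source idempotent $j$, so that half of the statement is already in hand.

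It remains to exhibit a $jCj$-$jCj$-bimodule retraction $\psi : iBi \to jCj$ of $\phi$, from which injectivity also follows. My plan is to apply Proposition~\ref{iBi-Morita-Prop} to the idempotent $f \in B^H \subseteq B^P$; this is legitimate because $\Br_P(fe) \ne 0$ forces $\Br_P(f) \ne 0$. The resulting Morita equivalence identifies $iBi = jfBfj$ with the \textquoteleft $j$-corner\textquoteright\ of $fBf$. I would then decompose $f\OG f$ via a Mackey-style argument along the double cosets in $H\backslash G/H$: the identity double coset contributes $f\OH f$, whose \textquoteleft $e$-component\textquoteright\ $f(\OH e)f$ becomes $\phi(jCj)$ after sandwiching with $j$, while the contributions from the other double cosets vanish under $\Br_P$ and should form a complementary $jCj$-bimodule summand. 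Projection onto the first summand (followed by sandwiching with $j$) would then define $\psi$.

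The main obstacle is precisely showing that the proposed complementary summand is stable under left and right multiplication by $jCj$, not merely under $\CO$-scalars. This is the heart of Fan--Puig \cite[4.10]{FanPu99}, whose Brauer pair formalism $(P,e) \le_G (P,bc)$ is designed exactly to organise this combinatorics; following their argument and restricting to the source-algebra corner via $j$ appears to be the cleanest route, and I do not see a substantially shorter elementary proof of the bimodule splitting.
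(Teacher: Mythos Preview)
The paper does not give its own proof of this proposition: it is stated as a citation of \cite[4.10]{FanPu99} (cf.\ \cite[Proposition 6.7.4]{LiBookII}), with the preceding sentence ``As an immediate consequence, we have the following'' referring only to the setup (existence of $f$ from Fan--Puig, and the applicability of Proposition~\ref{iBi-Morita-Prop} to $bc$, $e$, $f$), not to an independent argument. So there is no in-paper proof to compare against.

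That said, your proposal is sound. Your verification that $\phi(x)=xf=ixi$ is a unital algebra homomorphism $jCj\to iBi$ is correct and complete; the key point that $f$ commutes elementwise with $\OH$ (hence with $j$ and with $jCj=j\OH j$) is exactly right. For the bimodule splitting you correctly isolate the genuine content and honestly defer it to Fan--Puig, which is precisely what the paper does as well. Your sketched Mackey decomposition of $f\OG f$ over $H\backslash G/H$, with the identity double coset contributing $f\OH f$ and the rest forming the proposed complement, is the standard shape of such arguments; your caveat that bimodule stability of the complement is the real issue is accurate and is indeed what \cite[4.10]{FanPu99} establishes. In short, your write-up agrees with the paper's stance: the homomorphism is elementary, the splitting is Fan--Puig.
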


Upon coefficient extension to $K$, the homomorphism in Proposition \ref{jfi-Prop}
becomes a unital injective algebra homomorphism between 
semisimple $K$-algebras, leading to some obvious comparison statements
about dimensions of simple modules. We note this for future reference:

\begin{Lemma} \label{dim-Lemma}
Let $A$, $A'$ be finite-dimensional split semisimple $K$-algebras, and let
$f : A\to A'$ be a unital injective algebra homomorphism.
\begin{itemize}
\item[{\rm (i)}]
For any simple $A$-module $X$ there exists a simple $A'$-module such 
that $\dim_K(X)\leq \dim_K(X')$.

\item[{\rm (ii)}]
For any simple  $A'$-module $X'$ there exists a simple $A$-module $X$
such that $\dim_K(X)\leq \dim_K(X')$.
\end{itemize}
\end{Lemma}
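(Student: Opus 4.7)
The plan is to exploit the Wedderburn decomposition: being split semisimple over $K$, the algebras take the form $A\cong \prod_i M_{n_i}(K)$ and $A'\cong \prod_j M_{m_j}(K)$, with simple $A$-modules $V_i$ of dimension $n_i$ and simple $A'$-modules $W_j$ of dimension $m_j$. The homomorphism $f$ lets me restrict any $A'$-module to an $A$-module, and the key will be to track how simples interact with this restriction together with the central idempotents of $A$.

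For (ii), I would restrict the given simple $A'$-module $X'$ to $A$ along $f$. Because $f$ is unital, this restriction is nonzero of $K$-dimension equal to $\dim_K(X')$; since $A$ is semisimple, it decomposes as a direct sum of simple $A$-modules, and any summand $X$ satisfies $\dim_K(X)\leq \dim_K(X')$.

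For (i), given a simple $A$-module $X$ I would pick the central primitive idempotent $e\in A$ corresponding to $X$, so that $e$ acts as the identity on $X$ and as zero on every non-isomorphic simple $A$-module. Since $f$ is injective, $f(e)\neq 0$ in $A'=\prod_j M_{m_j}(K)$, so $f(e)$ has nonzero image in some Wedderburn factor $M_{m_j}(K)$. Taking $X'=W_j$, the idempotent $e$ then acts nontrivially on $X'$ via $f$, so $X$ occurs as a direct summand of the restriction of $X'$ to $A$, yielding $\dim_K(X)\leq \dim_K(X')$.

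There is no real obstacle here; the argument is entirely structural. The one point that must not be skipped is the use of injectivity in (i): without it, a central primitive idempotent of $A$ could vanish in $A'$ and no such $X'$ would be available. Everything else reduces to standard semisimple-module bookkeeping.
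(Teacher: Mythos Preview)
Your proposal is correct and follows essentially the same line as the paper: in both parts the point is that the given simple module occurs as a submodule of the restriction (along $f$) of some simple module for the other algebra, which immediately yields the dimension inequality. The paper's proof is terser and simply asserts this for part (i) as a consequence of the hypotheses, whereas you unpack it via the central primitive idempotent and the Wedderburn decomposition; the underlying argument is the same.
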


\begin{proof}
Identify $A$ to its image in $A'$ under $f$.
The hypotheses imply that for any simple $A$-module $X$ there exists a
simple $A'$-module $X'$ such that $X$ is isomorphic to a submodule of the 
restriction to $A$ of $X'$, whence (i). Similarly, for any simple $A'$-module
$X'$ there is a simple $A$-module $X$ which is isomorphic to a submodule
of $X'$ restricted to $A$, whence (ii).
\end{proof}

Recall that an irreducible character $\chi$ of $G$ is called {\em $p$-rational}
if its values are contained in $\Q(\zeta')$ for some root of unity $\zeta'$ of
order prime to $p$. It is well-known that $\chi$ is $p$-rational if and only
if all its generalised decomposition numbers are rational integers.
Since generalised decomposition numbers of an irreducible character of $B$
are invariants of the source algebra $iBi$, the property of being $p$-rational
can be read off the simple $K\tenO iBi$-module $iX$ corresponding to a
simple $K\tenO B$-module with character $\chi$.
Following Puig \cite{Puigpoint} (see e.g. \cite[Theorem 5.15.3]{LiBookI},
\cite[Remark 6.13.10]{LiBookII}) 
the generalised decomposition matrix of $B$ is equal to the square matrix
$$(\chi(u_\epsilon))_{\chi, u_\epsilon}$$
where $\chi$ runs over $\Irr(B)$, $u_\epsilon$ runs over a set of representatives
of the conjugacy classes of local pointed elements, and where $\chi(u_\epsilon)=$
$\chi(uj)$ for some (hence any) $j\in \epsilon$. Every local pointed group
on $B$ has a $G$-conjugate contained in $P_\gamma$, where $\gamma$ is the
local point of $P$ on $B$ containing the source idempotent $i$. Thus, in the
indexing of the generalised decomposition numbers,  we may choose
the $u_\epsilon$ in such a way that each $\epsilon$ contains an element
in $(iBi)^{\langle u\rangle}$, or equivalently, such that $u_\epsilon\in$ $P_\gamma$.

\begin{Lemma} \label{nilp-dec-Lemma}
Let $i\in B^P$ be a source idempotent. Let $u\in P$, and denote by $e_u$ the
unique block of $kC_G(u)$ such that $\Br_{\langle u\rangle}(i)e_u\neq 0$.
Suppose that $kC_G(u)e_u$ has a unique isomorphism class of simple modules. 
Then $\langle u\rangle$ has a unique local
point $\epsilon$ on $iBi$. Denote by $m$ the multiplicity of $\epsilon$ on $iBi$.
Let $j\in \epsilon$.  For every character of $B$ we have
$$\chi(ui) = m\cdot \chi(uj).$$
\end{Lemma}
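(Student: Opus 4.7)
The plan is to establish the two assertions separately. First I would show that $\langle u \rangle$ has a unique local point $\epsilon$ on $iBi$, using the standard dictionary between local points of $\langle u \rangle$ on $iBi$ associated to the Brauer pair $(\langle u \rangle, e_u)$ and simple modules of $kC_G(u) e_u$. Then I would prove the character identity by decomposing $i$ into primitive orthogonal idempotents in $(iBi)^{\langle u \rangle}$: the $m$ summands lying in $\epsilon$ contribute $m \cdot \chi(uj)$ by $(iBi)^{\langle u \rangle}$-conjugacy, while each non-local summand contributes zero via an induced-module argument.

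For the uniqueness, the Brauer homomorphism restricts to a map $(iBi)^{\langle u \rangle} \to kC_G(u)$ whose image lies in the corner $\Br_{\langle u \rangle}(i) \cdot kC_G(u) e_u \cdot \Br_{\langle u \rangle}(i)$ of $kC_G(u) e_u$. By standard source-algebra theory (cf.\ \cite[Section 5.15]{LiBookI}), primitive idempotents of $(iBi)^{\langle u \rangle}$ with nonzero Brauer image correspond, up to $(iBi)^{\langle u \rangle}$-conjugacy, to conjugacy classes of primitive idempotents in this corner. Since by hypothesis $kC_G(u) e_u$ has a unique isomorphism class of simple modules, $kC_G(u) e_u$ is a matrix algebra over a local algebra, and consequently all primitive idempotents in the corner are conjugate within the corner; this yields the uniqueness of $\epsilon$.

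For the character identity, I decompose $i = j_1 + \cdots + j_n$ into primitive orthogonal idempotents of $(iBi)^{\langle u \rangle}$, so that $\chi(ui) = \sum_k \chi(u j_k)$. By the uniqueness of $\epsilon$, each summand lies either in $\epsilon$ or in a non-local point, and there are exactly $m$ of the former; for these, writing $j_k = a_k j a_k^{-1}$ with $a_k$ a unit of $(iBi)^{\langle u \rangle}$, the element $a_k$ commutes with $u$, so the trace property of $\chi$ yields $\chi(u j_k) = \chi(u j)$. The main obstacle is to show that $\chi(u j_k) = 0$ for each non-local $j_k$. Such a $j_k$ has vertex a proper subgroup $R \subsetneq \langle u \rangle$, and by standard pointed-group theory one may write $j_k = \sum_{h \in \langle u \rangle / R} h a h^{-1}$ where $a \in (iBi)^R$ is a primitive idempotent whose distinct $\langle u \rangle$-conjugates are pairwise orthogonal. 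The map $h \otimes y \mapsto hy$ then defines a $K \langle u \rangle$-module isomorphism $\Ind^{\langle u \rangle}_R(aX) \cong j_k X$, where $X$ is an $\CO$-free $B$-module affording $\chi$. Since $\langle u \rangle$ is abelian and $u \notin R$, the Frobenius formula for induced characters yields $\chi(u j_k) = \chi_{j_k X}(u) = 0$. Summing the contributions gives $\chi(ui) = m \cdot \chi(uj)$.
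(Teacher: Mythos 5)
Your proposal is correct and follows essentially the same route as the paper: uniqueness of the local point via the correspondence with conjugacy classes of primitive idempotents in $\Br_{\langle u\rangle}(i)kC_G(u)e_u$, then decomposing $i$ in $(iBi)^{\langle u\rangle}$ and separating local from non-local summands. The only difference is that where the paper simply cites Brou\'e--Puig \cite[2.2]{BrPu} (equivalently \cite[Theorem 5.12.16]{LiBookI}) for the vanishing $\chi(uj_k)=0$ on non-local idempotents, you re-derive it by exhibiting $j_kX$ as a $K\langle u\rangle$-module induced from the proper subgroup $R$ and invoking the vanishing of induced characters off conjugates of $R$ -- which is in fact the content of that cited result.
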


\begin{proof}
By the assumptions,  $kC_G(u)e_u$ has a unique isomorphism class
of simple modules, hence a unique conjugacy class of primitive idempotents.
 Since $\Br_{\langle u\rangle}(j)$ is a primitive idempotent
in $kC_G(u)e_u$, it follows (e.g. from \cite[Theorem 4.7.1]{LiBookI})  that 
$\langle u\rangle$ has a unique local point on $iBi$. Let $J$ be a primitive 
decomposition of $i$ in $(iBi)^{\langle u\rangle}$. Let $j'\in J$. If $j'$ belongs to 
$\epsilon$, then $\chi(uj)=$ $\chi(uj')$. If $j$ does not belong to $\epsilon$, then 
$j$ belongs to a point of $\langle u\rangle$ which is not local. By 
\cite[2.2]{BrPu} (see also \cite[Theorem 5.12.16]{LiBookI})
we have $\chi(uj')=0$ in that case. Since $m$ is the number of elements in $J$
belonging to $\epsilon$, the result follows.
\end{proof}

%%%%%%%%%%%%%%%%%%%%%%%%%%%%%%%%%%%%%%%%
\section{A source algebra version of Giannelli's conjecture} %%%%%%%%%%%%%%
\label{source-Section}

We use the notation and hypotheses introduced at the beginning of the paper.
Slightly extending earlier notation, for $A$ an $\CO$-free $\CO$-algebra
of finite $\CO$-rank we denote by $\Irr(A)$ the set of isomorphism classes
of simple $K\tenO A$-modules, and by $\Irr_{p'}(A)$ the subset of $\Irr(A)$
consisting of the isomorphism classes of simple $K\tenO A$-modules of
dimension prime to $p$. With this notation, Proposition \ref{heightzero-Prop},
is equivalent to the following statement: given  an idempotent $i\in B^P$ such that
$\Br_P(i)$ is a primitive idempotent in $kC_G(P)$, 
the correspondence sending a simple $K\tenO B$-module $X$ to the
simple $K\tenO iBi$-module $iX$ induces a bijection
$$\Irr_0(B) \cong \Irr_{p'}(iBi).$$
In what follows, we
abusively use the same letters for isomorphism classes of simple modules
and representatives of these, whenever convenient.

\begin{Theorem} \label{thm2}
Let $j\in C^P$ and $i\in B^P$ be source idempotents of $C$ and $B$, respectively.
Suppose that there is a bijection
$g : \Irr_{p'}(iBi) \cong $ $\Irr_{p'}(jCj)$ satisfying $\dim_K(g(W))\leq$ $\dim_K(W)$
for any simple $iKGi$-module $W$ of dimension prime to $p$. 
Then the bijection $f :\Irr_0(B)\to $ $\Irr_0(C)$ induced by $g$ and the
standard Morita equivalence between $B$ and $iBi$ as well as between 
$C$ and $jCj$ satisfies $f(\chi)(1)\leq$ $\chi(1)$ for all $\chi\in$ $\Irr_0(B)$.
\end{Theorem}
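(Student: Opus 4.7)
The plan is to trace the assumed inequality $\dim_K g(W)\le\dim_K W$ on source algebra simples through the Morita equivalences $B\sim iBi$ and $C\sim jCj$. Given $\chi\in\Irr_0(B)$ with simple $K\tenO B$-module $X$, setting $W=iX$ places $W$ in $\Irr_{p'}(iBi)$ by (the reformulation of) Proposition~\ref{heightzero-Prop}. The hypothesis gives $g(W)\in\Irr_{p'}(jCj)$ with $\dim_K g(W)\le\dim_K iX$, and the Morita equivalence $C\sim jCj$ (Proposition~\ref{iBi-Morita-Prop} applied to $C$) identifies $g(W)=jX'$ for a simple $K\tenO C$-module $X'$; its character $\chi'=f(\chi)$ lies in $\Irr_0(C)$. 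The task is to deduce $\chi'(1)\le\chi(1)$.

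The $B$-side inequality $\chi(1)=\dim_K X\ge\dim_K iX$ is immediate, since $iX$ is the image of the idempotent $i$ on $X$ and hence a $K$-subspace of $X$. Combined with $\dim_K jX'\le\dim_K iX$ this already yields $\dim_K jX'\le\chi(1)$, and it remains to bound how far $\chi'(1)$ can exceed $\dim_K jX'$.

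For this I would exploit the rigid structure of the Brauer correspondent. Proposition~\ref{normal-Prop} identifies $jCj$ with the twisted group algebra $\CO_\alpha(P\rtimes E)$, whose simple $K$-modules are classified by Clifford theory applied to $P\triangleleft P\rtimes E$. Fong--Reynolds induction from $H$ to $N$ identifies $\Irr(C)$ with $\Irr(\OH e)$, and the Külshammer description of blocks with normal defect group expresses $\OH e$ as a matrix algebra over $\CO_\alpha(P\rtimes E)$. Combining these ingredients yields an explicit positive integer $M$, depending only on $G$, $N$, and the block data, such that $\chi'(1)=M\cdot\dim_K jX'$.

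The remaining inequality $M\cdot\dim_K iX\le\chi(1)$ is the main obstacle. I would attack it using two structural inputs: the unital injective algebra homomorphism $jCj\hookrightarrow iBi$ of Proposition~\ref{jfi-Prop}, which controls how simple $iBi$-modules restrict to $jCj$-modules (via Lemma~\ref{dim-Lemma}), and the common $p$-adic behaviour of height zero characters, namely $\chi(1)_p=|G|_p/|P|\ge|N|_p/|P|=\chi'(1)_p$ with $|N|_p\le|G|_p$. Together these should provide exactly the slack needed to absorb the factor $M$ into the $B$-side degree $\chi(1)$ and close the argument.
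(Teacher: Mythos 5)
Your reduction is set up correctly as far as it goes: you pass the hypothesis through the Morita equivalences, note $\dim_K(iX)\le\chi(1)$, and correctly identify that the Brauer correspondent has a rigid structure producing a positive integer $M$ with $\chi'(1)=M\cdot\dim_K(jX')$ (in the paper's notation $M=|J|$, the cardinality of a primitive decomposition $J$ of $c$ in $\CO C_G(P)c$, each of whose elements is a source idempotent of $C$). The residual claim you isolate, $M\cdot\dim_K(iX)\le\chi(1)$, is indeed exactly what the proof must supply, so the framework is right.

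But neither of your proposed inputs proves that claim. The embedding $jCj\hookrightarrow iBi$ from Proposition~\ref{jfi-Prop} combined with Lemma~\ref{dim-Lemma} only compares individual simple-module dimensions across that embedding; it gives no handle on the multiplicity $M$ appearing on the $C$-side, which lives at the level of $C$ versus $jCj$, not $jCj$ versus $iBi$. The $p$-adic argument also fails: since $\dim_K(iX)$ and $\dim_K(jX')$ are prime to $p$, one finds $(M\cdot\dim_K(iX))_p=M_p=|N_G(P)|_p/|P|$ while $\chi(1)_p=|G|_p/|P|$, so the $p$-parts are consistent with the desired inequality but the $p'$-parts are entirely uncontrolled by this observation; in particular when $|G|_p=|N_G(P)|_p$ it gives nothing at all. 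The idea you are missing is the one the paper uses in Lemma~\ref{thm2-Lemma}: multiplication by $b$ is a split injective unital homomorphism $C\to cBc$ of $C$-$C$-bimodules, and the image $Jb$ of the primitive decomposition $J$ is still a decomposition of $cb$ inside $(cBc)^P$. Because $\Br_P(j'b)\neq 0$ for each $j'\in J$, every $j'b$ decomposes as $i'+i''$ with $i'$ a source idempotent of $B$ and $\Br_P(i'')=0$, so $\dim_K(j'bX)\ge\dim_K(i'X)=\dim_K(iX)$. Summing over $J$ yields $\dim_K(cX)\ge |J|\cdot\dim_K(iX)$, and combined with $\dim_K(cX)\le\dim_K(X)=\chi(1)$ this is precisely $M\cdot\dim_K(iX)\le\chi(1)$. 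Without that step the proof does not close.
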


We state the key argument for the proof of Theorem \ref{thm2}
separately.

\begin{Lemma} \label{thm2-Lemma}
With the notation of Theorem \ref{thm2}, let
$X$ be a simple $K\tenO B$-module and $Y$ a simple $K\tenO C$-module.
If $\dim_K(iY)\leq$ $\dim_K(iX)$, then
$\dim_K(Y)\leq$ $\dim_K(cX)\leq \dim_K(X)$.
\end{Lemma}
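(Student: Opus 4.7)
The second inequality $\dim_K(cX) \le \dim_K(X)$ is immediate: since $c$ lies in $Z(\CO N_G(P)) \subseteq \OG$, the subspace $cX$ is a direct $K$-summand of $X$ viewed as a $\CO N_G(P)$-module by restriction.

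For the first inequality $\dim_K(Y) \le \dim_K(cX)$, the plan is to realise $Y$ as a $C$-submodule of $cX$ by working on the source algebra side and translating back through the Morita equivalence between $C$ and $jCj$ of Proposition \ref{iBi-Morita-Prop}. I would first record the idempotent identities $jc = cj = j$ (from $ec=e$, $j=je$, and the orthogonality of the distinct $N$-conjugates of $e$ in the trace sum $c = \Tr^N_H(e)$) and $ic = ci = i$ (from the fact that $f \in B^H$ commutes with $\CO C_G(P) \subseteq \CO H$, so that $i = jf = fj$). These yield the chain of $K$-subspace inclusions
$$iX \;=\; j(fX) \;\subseteq\; jX \;\subseteq\; cX \;\subseteq\; X$$
and the identity $jX = j(cX)$, so that by the Morita equivalence $C \sim jCj$ a $C$-submodule inclusion $Y \hookrightarrow cX$ is the same thing as a $jCj$-submodule inclusion $jY \hookrightarrow jX$.

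To produce $jY \hookrightarrow jX$, I would apply Lemma \ref{dim-Lemma}(ii) to the unital injection $\iota : jCj \hookrightarrow iBi$ of Proposition \ref{jfi-Prop} (which, after tensoring with $K$, is an embedding of split semisimple algebras) and the simple $iBi$-module $iX$: this produces a simple $jCj$-submodule of $\iota^*(iX)$ of $K$-dimension at most $\dim_K(iX)$. Combined with the hypothesis $\dim_K(iY) \le \dim_K(iX)$ — read as $\dim_K(jY) \le \dim_K(iX)$, with $jY$ the source-algebra companion of $Y$ under the Morita equivalence $C \sim jCj$ — and the $K$-linear inclusion $iX \subseteq jX$, that constituent is to be identified with $jY$ and transported to a $jCj$-submodule of $jX$. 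Transport back via Morita then gives $Y \hookrightarrow cX$, and hence the dimension bound.

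The main obstacle is precisely this identification/transport step. Lemma \ref{dim-Lemma}(ii) only supplies \emph{some} simple $jCj$-constituent of the right dimension in $\iota^*(iX)$, not specifically $jY$; moreover, the two $jCj$-module structures on $iX$ — the one coming from $\iota^*$ (acting by $\iota(y)v = i(yv)$) and the one inherited from the $C$-action via the inclusion $iX \subseteq jX \subseteq cX$ — are \emph{a priori} distinct. Reconciling these, using the relations $ij = ji = i$ together with the projection $i : jX \to iX$ and the factorisation $iX = j(fX)$, is where I expect the technical work to concentrate. Once the constituent is pinned down as $jY$ and transferred into $jX$, the rest of the argument is formal.
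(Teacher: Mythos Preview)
Your plan aims at something strictly stronger than the lemma: an actual $C$-embedding $Y\hookrightarrow cX$. That stronger statement is false in general. Take the degenerate case $G=P\rtimes E$ with $P$ abelian and $E$ cyclic, nontrivial, acting faithfully, so that $B=C=\OG$, $C_G(P)=P$, and hence $c=i=j=1$. Let $Y$ be a simple module of degree $1$ and $X$ one of degree $|E|$. The hypothesis $\dim_K(jY)\leq\dim_K(iX)$ certainly holds, but there is no nonzero $C$-map $Y\to X=cX$ since $X$ is simple and $Y\not\cong X$. So the obstacle you yourself flag --- that Lemma~\ref{dim-Lemma}(ii) produces \emph{some} simple $jCj$-constituent of $\iota^*(iX)$, not $jY$ specifically --- is not a technical wrinkle to be smoothed over by reconciling the two $jCj$-actions: $jY$ simply need not occur inside $iX$ at all, and your transport step cannot be completed.

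The paper's proof avoids any module-theoretic identification and runs a pure dimension count. One takes a primitive decomposition $J$ of $c$ in $\CO C_G(P)c$; every $j'\in J$ is then a source idempotent of $C$, so $\dim_K(j'Y)=\dim_K(jY)$ and $\dim_K(j'X)=\dim_K(jX)$ for all $j'\in J$. Summing over $J$ gives the uniform scaling $\dim_K(Y)=|J|\cdot\dim_K(jY)$ and $\dim_K(cX)=|J|\cdot\dim_K(jX)$. The remaining link $\dim_K(iX)\leq\dim_K(jX)$ comes from writing $j'b=i'+i''$ in $B^P$ with $i'$ a source idempotent of $B$ and $\Br_P(i'')=0$, whence $\dim_K(iX)=\dim_K(i'X)\leq\dim_K(j'X)$. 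Chaining $\dim_K(jY)\leq\dim_K(iX)\leq\dim_K(jX)$ and multiplying through by $|J|$ yields the result, with no embedding required.
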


\begin{proof}
We note first the well-known fact that $B$ and $cBc$ are Morita equivalent  
(by Proposition \ref{iBi-Morita-Prop} applied to the idempotent $bc$), 
and $C$ is isomorphic to a direct summand of $cBc$ as a
$C$-$C$-bimodule (cf. \cite[Theorem 6.7.2]{LiBookII}).
In particular, multiplication by $b$ is an injective unital
algebra homomorphism $C\to cBc$ which is split as a $C$-$C$-bimodule
homomorphism. 

Any primitive idempotent in $\CO C_G(P)c$ is a source
idempotent in $C^P$ (cf. \cite[Theorem 6.14.1]{LiBookII}). Thus, if
$J$ is a primitive decomposition of $c$ in $\CO C_G(P)c$, then $J$ is
also a primitive decomposition of $c$ in $C^P$, and every idempotent in $J$ is
a source idempotent of $C$. It follows that
$$\dim_K(Y)=|J|\cdot \dim_K(jY).$$
The same argument yields
$$\dim_K(cX) = |J|\cdot \dim_K(jX).$$
Since multiplication by $b$ is an injective algebra homomorphism $C\to$ $cBc$,
it follows that the image $Jb$ of the set $J$ remains a
(not necessarily primitive) decomposition of $cb$ in $(cBc)^P$. 
Noting that $\Br_P(b)$ is the image of $c$ in $kN_G(P)$, every $j'\in J$ satisfies
$\Br_P(j'b)\neq 0$, so every $j'b$ is of the form $i'+i{''}$ for
some source idempotent $i'\in B^P$ and some idempotent
$i{''}$ in $B^P$ which is orthogonal to $i'$ and satisfies $\Br_P(i{''})=0$.
Thus $j'Bj'$ and the source algebra $iBi$ are Morita equivalent,
and any simple $K\tenO B$-module $X$ satisfies $\dim_K(iX)=\dim_K(i'X)\leq$
$\dim_K(j'X)=$ $\dim_K(jX)$.  Thus we have 
$$|J|\cdot\dim_K(iX) \leq |J|\cdot \dim_K(jX)= \dim_K(cX).$$
The result follows from combining the above (in-)equalities.
\end{proof} 

\begin{proof}[{Proof of Theorem \ref{thm2}}]
It follows from Proposition \ref{heightzero-Prop} that
the bijection $g$ induces a bijection $f'$ between $\Irr_0(cBc)$ and
$\Irr_0(C)$, where $\Irr_0(cBc)$ denotes abusively the set of
isomorphism classes of simple $K\tenO cBc$-modules of the form $cX$, where
$X$ is a simple $K\tenO B$-module with a character of height zero. 
By Lemma \ref{thm2-Lemma} the  bijection $f$ satisfies
$\dim_K(f'(Y))\leq \dim_K(Y)$ for any simple
$K\tenO C$-module with character in $\Irr_0(C)$. The result follows.
\end{proof}

\begin{Remark}
The condition on $g$ in Theorem \ref{thm2}  
seems in general genuinely stronger than the conclusion
for $f$ in that Theorem, so even if $B$ were to satisfy
\cite[Conjecture B]{Gia25}, it is not clear whether this yields a map $g$
as in Theorem \ref{thm2}. The issue is that in 
Lemma \ref{thm2-Lemma} we do not know whether the inequality
$\dim_K(Y)\leq\dim_K(X)$ implies $\dim_K(jY)\leq\dim_K(iX)$.
By Proposition \ref{heightzero-Prop} one could replace $i$ by
a slightly larger idempotent (for instance, one could replace $i$ by $jb$). 
\end{Remark}

Theorem \ref{thm2} applies to nilpotent blocks.

\begin{Theorem} \label{nilp-Theorem}
Assume that $B$ is nilpotent.  Let $i\in B^P$ and $j\in C^P$ be source idempotents.
Then there is a perfect isometry
$\Phi : \Z\Irr(C)\cong$ $\Irr(B)$ induced by a Morita equivalence between
$B$ and $C$ such that $\dim_K(jY)\leq$ $\dim_K(i\Phi(Y))$, and hence
$\dim_K(Y)\leq$ $\dim_K(\Phi(Y))$,  for all
simple $K\tenO C$-modules $Y$, where $\Phi(Y)$ denotes a simple 
$K\tenO B$-module with character $\Phi(\chi)$, with $\chi$ the character of $Y$.
In particular, \cite[Conjecture B]{Gia25} holds for nilpotent blocks. 
\end{Theorem}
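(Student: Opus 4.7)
The plan is to reduce everything to a source-algebra identification via Puig's structure theorem for nilpotent blocks, and then feed the resulting isomorphism into Lemma \ref{thm2-Lemma}. Since $B$ is nilpotent, its inertial quotient $E$ is trivial, so Proposition \ref{normal-Prop} (applied with $E=1$ and $\alpha$ trivial) gives an isomorphism of interior $P$-algebras $jCj \cong \OP$. Puig's structure theorem for nilpotent blocks supplies the analogous isomorphism $iBi \cong \OP$ for $B$ itself, and composing yields an isomorphism of interior $P$-algebras $\varphi \colon iBi \cong jCj$.

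Next I would promote $\varphi$ to a Morita equivalence between $B$ and $C$. Proposition \ref{iBi-Morita-Prop} supplies the standard Morita equivalences $B \sim iBi$ and $C \sim jCj$ via the bimodules $Bi, iB$ and $Cj, jC$; using $\varphi$ to twist the $jCj$-action on $jC$ into an $iBi$-action, the $(B,C)$-bimodule $M = Bi \otimes_{iBi} jC$ induces a Morita equivalence $B \sim C$. Because this equivalence comes from a genuine isomorphism of source algebras (a Puig equivalence), it descends to a perfect isometry $\Phi \colon \Z\Irr(C) \cong \Z\Irr(B)$ by the standard theory of source-algebra equivalences (cf.\ the references on perfect isometries in the introduction).

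For the dimension comparison: given $Y \in \Irr(C)$ with $\Phi(Y) = X \in \Irr(B)$, the Morita equivalence identifies the simple $jCj$-module $jY$ with the simple $iBi$-module $iX$ after transport of structure through $K \tenO \varphi$. Since $K \tenO \varphi$ is a $K$-algebra isomorphism, this identification is $K$-linear, so $\dim_K(jY) = \dim_K(i\Phi(Y))$ — an equality, which in particular satisfies the hypothesis of Lemma \ref{thm2-Lemma} applied to $Y$ and $X = \Phi(Y)$. The lemma then yields $\dim_K(Y) \leq \dim_K(\Phi(Y))$, as required.

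Finally, perfect isometries preserve characters of height zero, so $\Phi$ restricts, up to signs, to a bijection $\Irr_0(C) \cong \Irr_0(B)$, and the degree inequality just established gives \cite[Conjecture B]{Gia25} for nilpotent blocks. The only substantive external input is Puig's structure theorem for nilpotent blocks; everything else is routine bookkeeping within the source-algebra framework developed in Sections \ref{background-Section} and \ref{source-Section}, so I do not anticipate a serious obstacle.
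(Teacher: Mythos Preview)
Your argument has a genuine gap: Puig's structure theorem for nilpotent blocks does \emph{not} say that $iBi\cong\OP$. It says that $iBi\cong S\tenO\OP$, where $S=\End_\CO(V)$ for some indecomposable endopermutation $\OP$-module $V$ with vertex $P$ (see \cite{Punil} or \cite[Section 8.11]{LiBookII}). In general $V$ is not the trivial module, so $S$ is a matrix algebra of rank $n^2$ with $n=\rk_\CO(V)>1$, and there is no isomorphism $\varphi\colon iBi\cong jCj$ as you claim. Consequently your equality $\dim_K(jY)=\dim_K(i\Phi(Y))$ is false in general; what one actually gets is $\dim_K(i\Phi(Y))=n\cdot\dim_K(jY)$.

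The fix is exactly what the paper does: use that $S\tenO\OP$ is a matrix algebra over $\OP$, hence Morita equivalent to $\OP\cong jCj$, and this Morita equivalence (not an isomorphism) is what induces the perfect isometry $\Phi$. Under this equivalence the simple $K\tenO iBi$-module corresponding to a simple $K\tenO\OP$-module $W$ is $K^n\tenK W$, so its dimension is $n\cdot\dim_K(W)\geq\dim_K(W)$. This gives the required inequality $\dim_K(jY)\leq\dim_K(i\Phi(Y))$, after which Lemma \ref{thm2-Lemma} (or Theorem \ref{thm2}) finishes the argument just as you outlined. So your overall strategy is right, but the misstatement of Puig's theorem breaks the specific mechanism you proposed for the dimension comparison.
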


\begin{proof}
This is an immediate  consequence of  Puig's structure theory of nilpotent blocks
(\cite{Punil}; cf. \cite[Section 8.11]{LiBookII}):
we have $jCj\cong$ $\OP$ and $jBj\cong$ $S\tenO \OP$, where $S=$ $\End_\CO(V)$
for some indecomposable endopermutation module $V$ with vertex $P$.
In particular, $iBi$ is isomorphic to a matrix algebra over $\OP$. This yields
the inequalities in the Theorem, and the last statement follows from
Theorem \ref{thm2}.
\end{proof}

Nilpotent blocks are a special case of {\em inertial blocks};  that is, blocks
which are Morita equivalent to their Brauer correspondent via a bimodule
with endopermutation source. If $B$ is inertial, then a source algebra of $B$
is of the form $S\tenO \CO_\alpha(P\rtimes E)$, where
$\CO_\alpha(P\rtimes E)$ is a source algebra of its Brauer correspondent
(with the notation from Proposition \ref{normal-Prop}). Thus, proceeding 
exactly as in the nilpotent block case, we obtain the following.

\begin{Theorem} \label{inertial-Theorem}
Assume that $B$ is inertial.  Let $i\in B^P$ and $j\in C^P$ be source idempotents.
Then there is a perfect isometry
$\Phi : \Z\Irr(C)\cong$ $\Irr(B)$ induced by a Morita equivalence between
$B$ and $C$ such that $\dim_K(jY)\leq$ $\dim_K(i\Phi(Y))$ for all
simple $K\tenO C$-modules $Y$, where $\Phi(Y)$ denotes a simple 
$K\tenO B$-module with character $\Phi(\chi)$, with $\chi$ the character of $Y$.
In particular, \cite[Conjecture B]{Gia25} holds for inertial blocks. 
\end{Theorem}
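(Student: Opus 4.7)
The plan is to follow the template of the proof of Theorem~\ref{nilp-Theorem} almost verbatim, replacing $\OP$ by $\CO_\alpha(P\rtimes E)$. By definition, $B$ being inertial means $B$ is Morita equivalent to $C$ via a bimodule with endopermutation source $V$ (an indecomposable $\OP$-module with vertex $P$). The structural statement recalled in the paragraph preceding the theorem then yields an isomorphism of interior $P$-algebras
$$iBi \;\cong\; S\tenO \CO_\alpha(P\rtimes E) \;\cong\; S\tenO jCj,$$
where $S=\End_\CO(V)$. This is the only input from structure theory that is needed.

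Next I would extend scalars to $K$, which converts the right-hand side into $M_n(K)\tenK(K\tenO jCj)$ for $n=\dim_\CO(V)$, so every simple $K\tenO iBi$-module has the form $V_K\tenK Y'$ for a unique simple $K\tenO jCj$-module $Y'$, with $\dim_K(V_K\tenK Y')=n\cdot\dim_K(Y')$. Transporting this correspondence along the standard Morita equivalences $B\sim iBi$ and $C\sim jCj$ from Proposition~\ref{iBi-Morita-Prop}, I obtain a bijection $\Phi:\Irr(C)\to\Irr(B)$ for which $i\Phi(Y)\cong V_K\tenK(jY)$, and therefore
$$\dim_K(jY)\;\leq\;n\cdot\dim_K(jY)\;=\;\dim_K(i\Phi(Y))$$
for every simple $K\tenO C$-module $Y$, as required. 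Applying Theorem~\ref{thm2} to the bijection $g:\Irr_{p'}(jCj)\cong\Irr_{p'}(iBi)$ cut out by this correspondence (using Proposition~\ref{heightzero-Prop} to identify $p'$-dimensional source-algebra simples with height-zero characters on each side) then gives Giannelli's conjecture.

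The perfect isometry claim follows from the standard fact that a Morita equivalence induced by a bimodule with endopermutation source yields a perfect isometry (see \cite[Sections~9.2, 9.3]{LiBookII}). There is no serious obstacle: the structure theorem for inertial blocks does all the work, reducing the dimension comparison to the trivial observation $n\geq 1$, and the remaining apparatus (Theorem~\ref{thm2}, the perfect-isometry input) has already been assembled in the paper. The only minor point worth flagging is that $n$ must be prime to $p$ for $\Phi$ to restrict to a bijection $\Irr_0(C)\cong\Irr_0(B)$, but this is automatic: otherwise $\Irr_{p'}(iBi)$ would be empty, contradicting the existence of height-zero characters in $B$ (equivalently, $n$ is forced to be prime to $p$ by comparing cardinalities $|\Irr_0(B)|=|\Irr_0(C)|$ via the perfect isometry).
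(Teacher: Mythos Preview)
Your proposal is correct and follows exactly the approach the paper indicates: the paper's ``proof'' is literally the sentence ``proceeding exactly as in the nilpotent block case, we obtain the following,'' and you have spelled out precisely that argument, replacing $\OP$ by $\CO_\alpha(P\rtimes E)$ and using the source-algebra structure $iBi\cong S\tenO jCj$ recorded just before the theorem. Your remark on $n$ being prime to $p$ is a welcome detail (it also follows directly from $V$ being an indecomposable endopermutation module with vertex $P$), and the rest is exactly the reduction via Theorem~\ref{thm2} that the paper intends.
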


%%%%%%%%%%%%%%%%%%%%%%%%%%%%%%%%%%%%%%%%%%%
\section{Frobenius inertial quotient and abelian defect}
\label{frob-Section}

Theorem \ref{thm1} is an immediate consequence of the following result,
combined with Theorem \ref{thm2}.

\begin{Theorem} \label{thm3}
Suppose that the defect group $P$ of $B$ is nontrivial abelian and that
$B$ has a cyclic inertial quotient $E$ acting freely
on $P\setminus \{1\}$.  Suppose that either $p$ is odd or that $|E|=$ $|P|-1$.
Let $i$ be a source idempotent in $B^P$.
Suppose that $|\Irr(B)|=$ $|\Irr(P\rtimes E)|$. 
There is a perfect isometry $\Phi :\Z\Irr(B) \cong$ $\Z\Irr(P\rtimes E)$
such that  $f(\chi)(1)\leq \chi(i)$ for any $\chi\in$ $\Irr(B)$.
Here $f:\Irr(B)\cong$ $\Irr(P\rtimes E)$ is the bijection such
that $\Phi(\chi)=$ $\delta(\chi) f(\chi)$ for some signs $\delta(\chi)\in$
$\{1,-1\}$, for all $\chi\in$ $\Irr(B)$. 
In particular, \cite[Conjecture B]{Gia25} holds for $B$. 
\end{Theorem}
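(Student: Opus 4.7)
The plan is to identify the character degrees on the target side, produce a perfect isometry from known cases of Broué's abelian defect group conjecture, and then verify the degree inequality using the Fan-Puig embedding of $\CO(P\rtimes E)$ into $iBi$.

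First I would unpack the target. Since $E$ is cyclic of order prime to $p$, the Schur multiplier $H^2(E;k^\times)$ vanishes, so by Proposition \ref{normal-Prop} the source algebra $jCj$ is isomorphic to $\CO(P\rtimes E)$ (trivial cocycle), giving a natural identification $\Irr(C)\cong\Irr(P\rtimes E)$. Because $E$ acts freely on $P\setminus\{1\}$, $P\rtimes E$ is a Frobenius group with kernel $P$ and complement $E$; its complex irreducibles split into $|E|$ linear characters inflated from $E$ and $(|P|-1)/|E|$ characters of degree $|E|$ induced from nontrivial linear characters of $P$. In particular, all target degrees lie in $\{1,|E|\}$ and $|\Irr(P\rtimes E)|=|E|+(|P|-1)/|E|$, matching the hypothesis on $|\Irr(B)|$.

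Next, to produce $\Phi$, I would invoke the known cases of Broué's abelian defect conjecture compatible with the hypotheses. For $p$ odd and $P$ cyclic, the derived equivalence between $B$ and $C$ provided by the Brauer tree (following Rickard and Rouquier) gives the perfect isometry. For $|E|=|P|-1$ — in particular the Klein four case at $p=2$ — derived or Morita equivalences between $B$ and $C$ are available through the explicit constructions of Puig, Rickard-Okuyama, and others for blocks with transitive inertial quotient. Any such equivalence induces a perfect isometry $\Phi:\Z\Irr(B)\cong\Z\Irr(C)\cong\Z\Irr(P\rtimes E)$.

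Finally I would verify the degree inequality by a restriction argument. By Proposition \ref{jfi-Prop} there is a unital injective algebra homomorphism $\CO(P\rtimes E)\cong jCj\to iBi$, split as $jCj$-$jCj$-bimodules. For $\chi\in\Irr(B)$ the character of a simple $K\tenO B$-module $X$, we have $\chi(i)=\dim_K(iX)$, and the restriction of $iX$ to $K(P\rtimes E)$ is semisimple with constituents of dimension $1$ or $|E|$. Lemma \ref{dim-Lemma}(ii) applied to the embedding yields at least one constituent of dimension at most $\dim_K(iX)$. Combined with the counting $|\Irr(B)|=|E|+(|P|-1)/|E|$, the task reduces to exhibiting a bijection in which the $|E|$ target characters of degree $1$ are matched with $B$-characters satisfying $\chi(i)\ge1$ (trivial) and the remaining $(|P|-1)/|E|$ target characters of degree $|E|$ are matched with $B$-characters satisfying $\chi(i)\ge|E|$. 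In cyclic defect this can be read off the Brauer tree description of $iBi$; in the Klein four $p=2$ case it follows from the explicit structure of the source algebra.

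The main obstacle I expect is reconciling the bijection coming from the source algebra analysis with the perfect isometry from the abelian defect conjecture: the derived equivalence does not a priori order characters by source-algebra dimension, so one must either arrange signs and reorderings compatibly, or construct the perfect isometry directly from the source algebra data (using integrality of generalised decomposition numbers, and the techniques around Lemma \ref{nilp-dec-Lemma}) and verify perfect isometry properties a posteriori.
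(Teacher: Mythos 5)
Your reduction to the statement ``characters matched to the degree-$|E|$ characters of $P\rtimes E$ must satisfy $\chi(i)\geq |E|$'' is the right target, but the route you propose to get there has two genuine gaps.

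First, you produce the perfect isometry from known cases of Brou\'e's abelian defect conjecture and then acknowledge, honestly, that this isometry need not be compatible with the dimension bound coming from the source algebra embedding. This is not a minor bookkeeping issue: it is the whole difficulty, and the paper avoids it entirely by not invoking Brou\'e's conjecture. Instead the paper uses the explicit perfect isometries for blocks with Frobenius inertial quotient from \cite[Section 10.5]{LiBookII}, whose defining feature is precisely that the images $\chi_\lambda$ of the nonlinear characters $\lambda$ form a distinguished (and, when $|\Lambda|\geq 2$, uniquely determined) subset of $\Irr(B)$, while the remaining characters can be labelled freely. That flexibility is what makes the degree comparison feasible.

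Second, your invocation of Lemma \ref{dim-Lemma}(ii) only tells you that $iX$ restricted to $K(P\rtimes E)$ has \emph{some} irreducible constituent of dimension at most $\dim_K(iX)$; since every restriction has some constituent, and the linear characters of $P\rtimes E$ are always candidates, this gives no information beyond the trivial $\chi(i)\geq 1$. What you actually need is that for $\chi_\lambda$ matched with $\lambda\in\Lambda$, the restriction of $iX$ has a constituent lying in $\Lambda$, i.e., of dimension $|E|$. The paper gets this from $p$-rationality: when $|\Lambda|\geq 2$ and $p$ is odd, $\Lambda$ and $\{\chi_\lambda\}$ are exactly the non-$p$-rational characters on each side, and Lemma \ref{nilp-dec-Lemma} (together with the nilpotency of the relevant block of $kC_G(u)$, using the Frobenius hypothesis on $E$) shows $\chi_\lambda(ui)\notin\Z$ for some nontrivial $u\in P$, forcing a non-$p$-rational constituent of $iX|_{K(P\rtimes E)}$, which must lie in $\Lambda$. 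Your fallback to ``the Brauer tree description of $iBi$'' covers only cyclic defect, not the general abelian case with cyclic Frobenius inertial quotient that the theorem addresses, and the Klein four appeal is unsubstantiated. On the other hand, when $|\Lambda|=1$ (the case $|E|=|P|-1$, covering Klein four), the paper does essentially what you sketch: Lemma \ref{dim-Lemma} produces one character of $\Irr(B)$ with $\chi(i)\geq |E|$, and \cite[Lemma 10.5.7]{LiBookII} permits choosing $\chi_\lambda$ to be that character. So your outline works for $|\Lambda|=1$ but is missing the central argument for the general odd-$p$ case.
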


\begin{proof}
We note first that $\CO(P\rtimes E)$ is a source algebra of the Brauer 
correspondent $C$ of $B$; indeed, since $E$ is cyclic, the class of
$\alpha$ in Proposition \ref{normal-Prop}  is trivial.  Thus, as recalled in 
Proposition \ref{jfi-Prop},  we may identify $\CO(P\rtimes E)$ with its 
image in the source  algebra $iBi$.  We note further that $\chi(i)$ is the
dimension of the simple $iBi$-module $iX$, where $X$ is a simple
$K\tenO B$-module having $\chi$ as character. Thus, by Theorem \ref{thm2}, 
the existence of a bijection $f$ as stated in the Theorem  
implies \cite[Conjecture B]{Gia25}.

We need some background material from \cite[Section 10.5]{LiBookII}
in order to describe perfect isometries. In keeping with the notation in that
reference, we consider perfect isometries from $\Irr(P\rtimes E)$ to
$\Irr(B)$ and use these to construct an inverse to the map called $f$ in the
statement.

By  \cite[Theorem 10.5.10]{LiBookII} all characters in $\Irr(B)$ have
height zero (this follows, of course, also from the general proof of one
direction of Brauer's height zero conjecture in \cite{KeMa}).  
We have a partition $\Irr(P\rtimes E) = M \cup \Lambda$, defined as follows. 
The set  $M$ consists of the irreducible character with $P$ in their kernel.
The characters in $M$ lift the simple $k(P\rtimes E)$-modules,
and the characters in $\Lambda$ are of the form $\Ind^{P\rtimes E}_P(\zeta)$,
with $\zeta$ running over a set of representatives of the $E$-orbits in
the set of nontrivial irreducible characters of $P$. In particular, we have
$|\Lambda|=$ $\frac{|P|-1}{|E|}$. The characters in $M$ have degree $1$, and
the characters in $\Lambda$ have degree $|E|$. 

By \cite[Lemma 10.5.8]{LiBookII}, \cite[Remark 10.5.9]{LiBookII},
and \cite[Theorem 10.5.10]{LiBookII}
there is a subset $\{\chi_\lambda\ |\ \lambda\in\Lambda\}$ of $\Irr(B)$
with the property that if we choose any labelling $\{\chi_\mu\ |\ \mu\in M\}$
of a complement of this set in $\Irr(B)$,  then
there is a perfect isometry $\Phi : \Z\Irr(P\rtimes E)\cong$ $\Irr(B)$
such that
$$\Phi(\lambda) = \delta\chi_\lambda$$
for all $\lambda\in$ $\Lambda$ and some sign $\delta$, and
$$\Phi(\mu) = \delta(\mu)\chi_\mu$$
for all $\mu\in $ $M$ and some signs $\delta(\mu)$. 
We denote by $g : \Irr(P\rtimes E)\cong$ $\Irr(B)$ the bijection
induced by $\Phi$; that is, $g(\mu)=$ $\chi_\mu$ for $\mu\in M$, and
$g(\lambda) = \chi_\lambda$ for $\lambda\in$ $\Lambda$. 

Assume first that  $|\Lambda|=1$; that is, $|E|=|P|-1$. 
Denote by $\lambda$ the unique element 
in $\Lambda$. By  \cite[Lemma 10.5.7]{LiBookII}
we may choose $\chi_\lambda$ arbitrarily in $\Irr(B)$; thus, by
Lemma \ref{dim-Lemma} applied to $\CO(P\rtimes E)$ and its
image in $iBi$, we may choose $\chi_\lambda$ such that
$\lambda(1) \leq \chi_\lambda(i)$. Since all remaining characters
of $\Irr(P\rtimes E)$ are in the set $M$ of characters of degree $1$,
it follows that the inverse of the bijection $g$  satisfies the conclusion
in the statement.

Assume that $|\Lambda|\geq 2$. Then $|E|<|P|-1$, so by the assumptions on
$|E|$, we may assume that $p$ is odd.
Then the set $\{\chi_\lambda\ |\ \lambda\in\Lambda\}$
is uniquely determined (cf. \cite[Lemma 10.5.7]{LiBookII}). 
Since $p$ is odd, the set $\Lambda$ is
exactly the set of characters in $\Irr(P\rtimes E)$ which are not $p$-rational,
and $\{\chi_\lambda\ |\ \lambda\in\Lambda\}$ is the set of characters in $\Irr(B)$
which are not $p$-rational. 
Let $\lambda\in$ $\Lambda$. Since $\chi_\lambda$ is not $p$-rational, there
is a non-trivial element $u\in P$ and a primitive idempotent $j$ in
$(iBi)^{\langle u\rangle}$ belonging to a local point of $\langle u\rangle$
on $iBi$ such that $\chi(uj)\not\in$ $\Z$. Since $E$ acts freely on $P\setminus \{1\}$,
it follows that the unique block $e_u$ of $kC_G(u)$ satsifying 
$\Br_{\langle u\rangle}(i)e_u\neq 0$ is nilpotent (cf. \cite[Lemma 10.5.2]{LiBookII}),
hence has a unique isomorphism class of simple  modules. 
It follows from Lemma \ref{nilp-dec-Lemma} that $\chi_\lambda(ui)\not\in \Z$.
Thus if $X$ is a simple $K\tenO B$-module with character $\chi_\lambda$, then
the restriction of $iX$ to $K(P\rtimes E)$ has a character which is not $p$-rational,
so must have a constituent equal to $\lambda'$ for some $\lambda'\in$ $\Lambda$.
In particular, $\lambda'1(1)\leq$ $\chi_\lambda(i)$, where we note that $\chi_\lambda(i)=$
$\dim_K(iX)$.  Since the $\lambda\in$ $\Lambda$ all have the same degree, it
follows that $\lambda(1)\leq$ $\chi_\lambda(i)$ for all $\lambda \in \Lambda$.
Since $\mu(1)=1\leq \chi_\mu(i)$ for all $\mu\in $ $M$, it follows again
that the inverse of the bijection $g$ satisfies the conclusion of the statement.
\end{proof}

\begin{proof}[{Proof of Theorem \ref{thm1}}]
Suppose first that $P$ is cyclic. If $P=1$ there is nothing to prove, so
assume that $P$ is nontrivial.  The equality $|\Irr(B)|=$ $|\Irr(P\rtimes E)|$ 
goes  back to Dade \cite{Da66} (see e.g. \cite[Theorem 11.1.3]{LiBookII}). 
If $p=2$, then $E=1$; that is, the block
$B$ is nilpotent. In that case, Theorem \ref{thm1} follows from 
Theorem \ref{nilp-Theorem}. If $p>2$, then Theorem \ref{thm1} is
a special case of Theorem \ref{thm3}. Suppose next that $p=2$ and that
$P$ is a Klein four group. In that case the equality $|\Irr(B)|=$ $|\Irr(P\rtimes E)|$ 
is due to Brauer \cite{Bra71} (see e.g. \cite[Corollary 12.1.5]{LiBookII}). 
The inertial quotient $E$ is either trivial, or has order $3$.
In the first case $B$ is nilpotent, so this case of Theorem \ref{thm1}
follows as before from Theorem \ref{nilp-Theorem}. In the second case
we have $|E|=3=|P|-1$, so Theorem \ref{thm1} follows again from 
Theorem \ref{thm3}.
\end{proof}

\begin{Remark}
For $p=2$ one other case of interest covered by Theorem \ref{thm3} is
where $P$ is an elementary abelian $2$-group of rank $n$ and $E$ cyclic, 
generated by a Singer cycle (i.e. an automorphism of order $|P|-1$ of $P$). 
Then, by the results of McKernon in \cite{McKer}, 
$B$ is Morita equivalent via a bimodule with endopermutation source
to either $\CO(P\rtimes E)$ (that is, $B$ is inertial), or to the principal 
block of $\CO\SL_2(2^n)$.
\end{Remark}

\begin{Remark}
If the block $B$ of $\OG$ has an abelian defect group $P$, then Brou\'e's
Abelian Defect Conjecture predicts the existence of a derived
equivalence, hence a perfect isometry, between $B$ and its Brauer
correspondent $C$.
One obvious question is whether such a derived equivalence can
be chosen in such a way that the character bijection obtained from
the induced perfect isometry satisfies the inequalities in Theorem
\ref{thm1}, or possibly even those in Theorem \ref{thm3}.
\end{Remark}

\bigskip\noindent
{\bf Acknowledgements.} 
The author acknowledges support from  EPSRC grant EP/X035328/1. 
The author would like to thank Eugenio Giannelli
for some very helpful comments on this paper. The author would further
like to thank  the Department of
Mathematics of the University of Essex for organising  the conference 
AGGITatE 2025 that lead to writing this paper. 

\bigskip
%%%%%%%%%%%%%%%%%%%%%%%%%%%%%%%%%%%%%%%%%%%%%%

\end{document}